\documentclass{kms-b}

\issueinfo{}% volume number
  {}%        % issue number
  {}%        % month
  {}%     % year
\pagespan{1}{}
\copyrightinfo{}%              % copyright year
  {Korean Mathematical Society}% copyright holder
\usepackage{graphicx}
\usepackage{amssymb}
\usepackage{mathrsfs}
\usepackage[all]{xy}
\usepackage{tikz}
\allowdisplaybreaks

\numberwithin{equation}{section}

\theoremstyle{plain}
\newtheorem{theorem}{Theorem}[section]

\newtheorem{lemma}[theorem]{Lemma}

\theoremstyle{definition}
\newtheorem{definition}{Definition}

\theoremstyle{remark}

\begin{document}

\title[Hecke Curves in Frobenius Strata]
{Hecke curves in Frobenius strata of moduli space of
rank 2 vector bundles}

\author[L. Li]{Lingguang Li}
\address{Lingguang Li \\Key Laboratory of Intelligent Computing and Applications (Ministry of Edu-\\cation), School of Mathematical Sciences\\Tongji University \\ Shanghai 200092, China}
\email{LiLg@tongji.edu.cn}

\author[H. Zhang]{Hongyi Zhang}
\address{Hongyi Zhang \\ School of Mathematical Sciences\\ Tongji University \\ Shanghai 200092, China}
\email{2211076@tongji.edu.cn}
\thanks{This work was supported by National Natural Science Foundation of China (Grant No. 12171352), Applied Basic Research Programs of Science and Technology Commission Foundation of Shanghai Municipality (22JC1402700)}

\subjclass[2020]{Primary 14H60, 14G17, 14H45}
\keywords{Hecke curve, Frobenius stratification, moduli space of vector bundles}

\begin{abstract}
Let $k$ be an algebraically closed field with characteristic $2$, and let $X$ be a smooth projective algebraic curve of genus $g \geqslant 2$ over $k$. Let $\mathcal{M}^s_X(2,\mathcal{L})$ be the moduli space of rank $2$ stable vector bundles with determinant $\mathcal{L}$ on $X$. The Frobenius stratification measures the instability of
bundles in $\mathcal{M}^s_X(r,\mathcal{L})$ under pullback by the Frobenius map. We show that there exists a Frobenius stratum in $\mathcal{M}^s_X(2,\mathcal{L})$ which is covered by Hecke curves.
\end{abstract}

\maketitle

\section{Introduction}\label{intro}
Let $k$ be an algebraically closed field, and let $X$ be a smooth projective curve of genus $g \geqslant 2$ over $k$. Let $r\geqslant 2$ and $d$ be integers. Following the convention in the classical reference \cite{LeP}, we denote by $\mathcal{M}^s_X(r,d)$ the moduli space of stable vector bundles of rank $r$ and degree $d$ on $X$. A point $[E] \in \mathcal{M}^s_X(r,d)$ represents the isomorphism class of a stable vector bundle $E$ of rank $r$ and degree $d$. For any line bundle $\mathcal{L} \in \mathrm{Pic}^d(X)$, $\mathcal{M}_X^s(r,\mathcal{L})$ is the moduli space of stable vector bundles of rank $r$ with determinant $\mathcal{L}$ over $X$.  

If $\mathrm{char}(k)=p>0$, there exists a natural absolute Frobenius morphism $F:X \to X$ induced by $F^{\#}:\mathcal{O}_X \to \mathcal{O}_X$, $f \mapsto f^p$. It is well-known that the slope-stability of vector bundles may be destroyed under Frobenius pull-back. The points of $\mathcal{M}^s_X(r,d)$ that are destabilized by Frobenius pull-back can be classified using the  Harder-Narasimhan polygons (see Section \ref{sec2} for some details) of their pull-backs. The classification endows   $\mathcal{M}^s_X(r,d)$ (also $\mathcal{M}^s_X(r,\mathcal{L})$) with the so-called \emph{Frobenius stratification}. For certain moduli spaces determined by specific parameters such as $p,r,d,g$ and $\mathcal{L}$, many geometric properties of these strata have been studied. For further details on the  stratification of moduli spaces, we refer the reader to \cite{PazLange},  \cite{RussoBigas}, \cite{JRXY06}, \cite{Li19I}, \cite{Li20II}, \cite{LiZhang24}. 

In the case $\mathrm{char}(k)=2$, K. Joshi et al. \cite{JRXY06} completely describe the geometric properties of the Frobenius strata in $\mathcal{M}^{s}_X(2,d)$. For each $j\in \{1,2,...,g-1\}$ and Harder-Narasimhan polygon $\mathscr{P}_j$ (see the definition of $\mathscr{P}_j$ in Section \ref{sec2}), there are Frobenius strata $\mathcal{M}_j=  \big\{ [E]\in \mathcal{M}_X^s(2,d) \mid \mathrm{HNP}(F^*E) \succcurlyeq \mathscr{P}_j \big\}$. We denote the induced Frobenius strata 
$\mathcal{M}_j \cap \mathcal{M}^s_X(2,\mathcal{L})$ by $\mathcal{M}_j(\mathcal{L})$.

Rational curves in $\mathcal{M}_X^s(r,\mathcal{L})$  over $\mathbb{C}$ were studied by X. Sun in \cite{Sun05}.  Since the anti-canonical bundle $-K_{\mathcal{M}_X^s(r,\mathcal{L})}$ is ample, the degree of a rational curve $\phi: \mathbb{P}^1 \to \mathcal{M}_X^s(r,\mathcal{L})$ is well defined as $\mathrm{deg} (\phi^*(-K_{\mathcal{M}_X^s(r,\mathcal{L})}))$. He shows that any rational curve passing through the generic point of $\mathcal{M}_X^s(r,\mathcal{L})$ has degree at least $2r$. Hence, rational curves in $\mathcal{M}^s_X(r,\mathcal{L})$ with degree $2r$ are called \emph{minimal} rational curves. He also shows that a rational curve has degree $2r$ if and only if it is a \emph{Hecke curve} (see Section \ref{sec3}), unless $g = 3, r = 2$ and $d$ is even. The rational curves with degree less than $2r$, which are contained in a proper closed subvariety, are called \emph{small} rational curves. M. Liu \cite{Liu12} studies the codimension of the closed subvariety containing all small rational curves. J.-M. Drezet and M. S. Narasimhan \cite{DreNar89} show that $\mathrm{Pic}(\mathcal{M}^s_X(r,\mathcal{L})) = \mathbb{Z}\cdot \theta $ and $-K_{\mathcal{M}^s_X(r,\mathcal{L})}=2(r,d) \cdot \theta$, where $\theta$ is the generator ample line bundle. Thus, rational curve in $\mathcal{M}_X^s(r,\mathcal{L})$ of degree $2(r,d)$ are called \emph{lines}.  When $(r,d)=1$, X. Sun \cite{Sun05} completely determines all lines in $\mathcal{M}_X^s(r,\mathcal{L})$. Further studies of rational curves can be found in \cite{ChoeChungLee}, \cite{Hwang04}, \cite{Kollar13}, \cite{MokSun09}.

In the case $\mathrm{char}(k)=p>0$, N. Hoffmann \cite{Hoffmann12} shows that $\mathrm{Pic}(\mathcal{M}^s_X(r,\mathcal{L})) = \mathbb{Z}\cdot \mathcal{L}_{\theta}$, where $\mathcal{L}_{\theta}$ is the universal determinant line bundle. In this paper, we study \emph{Hecke curves} in Frobenius strata of $\mathcal{M}^s_X(2,\mathcal{L})$. Our main results are summarized in the following theorem.
\begin{theorem}
Let $k$ be an algebraically closed field of characteristic $2$, and let $X$ be a smooth projective curve of genus $g\geqslant 2$ over $k$. Let $\mathcal{M}^s_X(2,d)$ be the moduli space of stable vector bundles of rank $2$ and degree $d$ over $X$ and $\mathcal{M}_j:= \{ [E]\in \mathcal{M}_X^s(2,d) \mid \mathrm{HNP}(F^*E) \succcurlyeq \mathscr{P}_j \}$, $\mathcal{M}_j(\mathcal{L}):=\mathcal{M}^s_X(2,\mathcal{L}) \cap \mathcal{M}_j$ be the Frobenius strata,  $1\leqslant j \leqslant g-1$. Then
\begin{itemize}
    \item [(1)] There is no rational curve in $\mathcal{M}_{g-1}$;
    \item[(2)] For any rational curve $\phi: \mathbb{P}^1 \to \mathcal{M}^s_X(2,d)$, there exists $\mathcal{L} \in \mathrm{Pic}^d(X)$ such that $\phi(\mathbb{P}^1) \subseteq \mathcal{M}^s_X(2,\mathcal{L}) \subseteq \mathcal{M}^s_X(2,d)$;
    \item[(3)] For any $\mathcal{L}\in \mathrm{Pic}^d(X)$ and any $[E] \in \mathcal{M}_{g-2}(\mathcal{L}) \backslash \mathcal{M}_{g-1}(\mathcal{L})$, there exists a Hecke curve in $\mathcal{M}_{g-2}(\mathcal{L})\backslash \mathcal{M}_{g-1}(\mathcal{L})$ passing through $[E]$.
\end{itemize}
\end{theorem}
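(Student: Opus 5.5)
The plan uses the description of the strata in \cite{JRXY06}. For $[E]\in\mathcal{M}_j$, the Harder--Narasimhan filtration of $F^*E$ is a line subbundle $L\subset F^*E$ with $\deg L-\deg(F^*E/L)\geqslant 2j$, so $\deg L=d+j$ up to the normalisation fixed in Section \ref{sec2}. By adjunction $L\subset F^*E$ corresponds to a quotient $F_*L\to E$. Since $F_*L$ has rank $2$, comparing degrees via Grothendieck--Riemann--Roch ($\deg F_*L=\deg L+g-1$) shows the following.
\begin{itemize}
\item For $j=g-1$ the map is an isomorphism: $E\cong F_*L\otimes(\text{line bundle})$.
\item For $j=g-2$, $E$ is an elementary transformation of $F_*L$ at one point.
\end{itemize}

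\textbf{Part (2).} Take $\phi:\mathbb{P}^1\to\mathcal{M}^s_X(2,d)$ and compose with the determinant morphism $\det:\mathcal{M}^s_X(2,d)\to\mathrm{Pic}^d(X)$. Since $\mathrm{Pic}^d(X)$ is an abelian variety, it contains no rational curves, so $\det\circ\phi$ is constant with some value $\mathcal{L}$. Hence $\phi(\mathbb{P}^1)\subseteq\mathcal{M}^s_X(2,\mathcal{L})$. This is routine.

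\textbf{Part (1).} By the first case above, $\mathcal{M}_{g-1}$ is the image of $\mathrm{Pic}(X)\to\mathcal{M}^s_X(2,d)$, $L\mapsto F_*L$ (twisted as needed). The fibres of this map are finite: $F_*L\cong F_*L'$ forces $L\cong L'$, since $L$ is the maximal destabilising subbundle of $F^*F_*L$. So $\mathcal{M}_{g-1}$ is a finite image of an abelian variety. A rational curve in $\mathcal{M}_{g-1}$ would therefore lift, after normalising the pullback of the finite map, to a curve dominated by a rational curve mapping to an abelian variety; such a map is constant. I would present this via the finite morphism from $\mathrm{Pic}$.

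\textbf{Part (3).} Fix $[E]\in\mathcal{M}_{g-2}(\mathcal{L})\setminus\mathcal{M}_{g-1}(\mathcal{L})$. By the second case above there is an exact sequence $0\to E\to F_*L\to k_x\to0$, or dually $F_*L$ sits as an elementary transformation of $E$. Recall from Section \ref{sec3} that a Hecke curve through $[E]$ is built from a point $x$ and the $\mathbb{P}^1$ of quotients $E'_x\to k$ of a fixed bundle $E'$ with $E\subset E'$, $E'/E=k_x$. Take $E'=F_*L$ at the point $x$ above, and consider all $E_t=\ker(F_*L\to F_*L|_x\to k)$ for $t\in\mathbb{P}(F_*L|_x)\cong\mathbb{P}^1$.
\begin{itemize}
\item \textbf{Determinant.} Every $E_t$ has determinant $\det F_*L(-x)=\mathcal{L}$.
\item \textbf{Stratum.} Each $E_t$ lies in $\mathcal{M}_{g-2}$: by adjunction it receives $F^*E_t\to F^*F_*L$, and the composite $L'\to F^*F_*L\to L$ produces a line subbundle of $F^*E_t$ of the required degree. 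More directly, the sub $L\otimes$(canonical filtration piece) of $F^*F_*L$ pulls back to a subsheaf of $F^*E_t$ of degree dropping by at most $2$ (the colength of $F^*E_t\subset F^*F_*L$ is $2$). This keeps $\mathrm{HNP}\succcurlyeq\mathscr{P}_{g-2}$.
\item \textbf{Not in $\mathcal{M}_{g-1}$.} If $E_t$ were in $\mathcal{M}_{g-1}$, it would be some $F_*L''\otimes M$, contradicting the maximal degree computation. This also follows from part (1): the curve meets $\mathcal{M}_{g-1}$ only in finitely many points, and at $t_0=[E]$ it is outside.
\end{itemize}

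\textbf{Main obstacle.} The hard step is stability of every $E_t$, so that the family is a genuine Hecke curve in $\mathcal{M}^s$, together with excluding all $t$ from $\mathcal{M}_{g-1}$, not just generic $t$. For stability I would use that $F_*L$ is stable with large stability margin. In characteristic $2$, $F^*F_*L$ has filtration $0\to L\otimes\omega_X\to F^*F_*L\to L\to0$, so every line subbundle $N\subset F_*L$ has $\deg N\leqslant(\deg F_*L-(g-1))/2$. Hence any line subbundle of $E_t$ has degree $<\deg E_t/2$ whenever $g\geqslant2$. For the $\mathcal{M}_{g-1}$ exclusion, I would argue via the unique maximal subsheaf of $F^*E_t$: $F^*E_t\to L$ has image $L(-?x)$ of degree exactly $\deg L-1$, which is forced independently of $t$ by the local structure of $F^*F_*L$ at $x$. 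This fixes the HN polygon to be exactly $\mathscr{P}_{g-2}$.
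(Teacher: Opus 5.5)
Your part (2) is the paper's argument, and in (3) you build the paper's curve. Since $E^\zeta\cong (F_*L)(-x)$, the family $\{\widetilde{(E^\zeta)^l}\}$ is exactly the $\mathbb{P}((F_*L)_x)$ of colength-one subsheaves $E_t\subset F_*L$ at $x$. Your direct stability bound ($2\deg N\leqslant\deg L$ for a line subbundle $N\subset F_*L$) is correct for $g\geqslant 3$, and so is your colength-two argument giving $E_t\in\mathcal{M}_{g-2}$; $g\geqslant 3$ is needed anyway for $\mathcal{M}_{g-2}$ to be a stratum. Together they replace the paper's appeal to Lemma \ref{lemma3}.

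Two small points. First, in your preliminary paragraph the adjunction runs the other way: it is the HN quotient $F^*E\to L$ that corresponds to $E\hookrightarrow F_*L$, as in Lemma \ref{lemma4}. Second, in (1) finiteness of $L\mapsto F_*L$ alone would not suffice, since a finite cover of $\mathbb{P}^1$ need not be rational. What finishes it is injectivity: the lifted curve is then purely inseparable over $\mathbb{P}^1$, hence rational by L\"uroth. The paper simply quotes the isomorphism $\mathrm{Pic}^{d-g+1}(X)\cong\mathcal{M}_{g-1}$.

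The genuine gap is the step \emph{Not in $\mathcal{M}_{g-1}$}, and it cannot be filled, because the curve does meet $\mathcal{M}_{g-1}(\mathcal{L})$.
\begin{itemize}
\item The subsheaf $F_*(L(-x))\subset F_*L$ has cokernel $F_*\textbf{k}(x)$, of length one at $x$. Locally it is $t\,k[[t]]\subset k[[t]]$, the member $\overline{\boldsymbol{w}}\mapsto\overline{t}$ in the paper's local computation. So it is one of your $E_t$.
\item It has determinant $\mathcal{L}$, it is stable, and it lies in $\mathcal{M}_{g-1}(\mathcal{L})$ because $\deg L(-x)=d-g+1$.
\item Your proposed mechanism actually detects exactly this point. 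If the image of the adjoint map $F^*E_t\to L$ had degree $\deg L-1$, its kernel would have degree $d+g-1$, giving polygon $\mathscr{P}_{g-1}$, not $\mathscr{P}_{g-2}$.
\item The correct dichotomy: $F^*E_t\to L$ is surjective (kernel of degree $d+g-2$) unless its image is some $L(-D)$ with $D>0$. In that case $E_t\subset F_*(L(-D))$, and comparing degrees and supports forces $D=x$ and $E_t=F_*(L(-x))$.
\end{itemize}
So the Hecke curve lies in $\mathcal{M}_{g-2}(\mathcal{L})$ and meets $\mathcal{M}_{g-1}(\mathcal{L})$ in exactly one point.

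The paper's proof has the same hole. Its contradiction uses $\deg F_*L'=\deg L'+g-2$, whereas $\deg F_*L'=\deg L'+g-1$, as its own part (1) requires. That only yields $\deg F_*L'\leqslant d$, with equality for $L'=L(-x)$.

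No other choice of Hecke curve helps either. Along a family over $\mathbb{P}^1$ inside $\mathcal{M}_{g-2}(\mathcal{L})\setminus\mathcal{M}_{g-1}(\mathcal{L})$, the bundle $L$ and the point $x$ are determined by $E_t$ (for $g\geqslant 3$, $\mathrm{Hom}(E_t,F_*L)=k$). They vary algebraically, hence are constant. The family then maps into $\mathbb{P}((F_*L)_x)\setminus\{[F_*(L(-x))]\}\cong\mathbb{A}^1$ and is therefore constant.

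What your argument does establish is the corrected statement: through every $[E]\in\mathcal{M}_{g-2}(\mathcal{L})\setminus\mathcal{M}_{g-1}(\mathcal{L})$ there is a Hecke curve contained in $\mathcal{M}_{g-2}(\mathcal{L})$, meeting $\mathcal{M}_{g-1}(\mathcal{L})$ only at $[F_*(L(-x))]$.
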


\section{Moduli space}\label{sec2}
Let $k$ be an algebraically closed field of characteristic $2$. Let $X$ be a smooth projective curve of genus $g\geqslant 2$ over $k$, and 
$\mathcal{M}^{s}_X(2,d)$ the moduli space of stable vector bundles of rank $2$ and degree $d$ over $X$.

\begin{definition}
Let $E$ be a vector bundle over $X$. The \emph{slope} of $E$ is the rational number $\mu(E)=\mathrm{deg}(E)/\mathrm{rank}(E)$.
$E$ is called \emph{stable} (resp. \emph{semistable}) if for any non-zero proper subbundle $F \subset E$,
$\mu(F) < \mu(E)$ (resp. $\mu(F) \leqslant  \mu(E)$).
\end{definition}
It is well-known that the Frobenius morphism $F:X \to X$, induced by $f \mapsto f^p$, may destroy the stability of vector bundles \cite{Gieseker73}. The instability can be measured in terms of \emph{Harder-Narasimhan} polygons (HNP): for a vector bundle $E$ on $X$, the Harder-Narasimhan filtration of $E$ is the unique filtration
\begin{equation}
\mathrm{HN}_{\bullet}(E): 0=E_0 \subset E_1 \subset E_2 \subset\cdots \subset  E_{n-1} \subset E_n = E, \nonumber
\end{equation}
such that $E_{i} / E_{i-1}(i=1,2,\dots, n)$ are semistable and
$\mu(E_1) >\mu(E_2/E_1) > \cdots > \mu(E_n /E_{n-1} )$. 
Connecting $(\mathrm{rank}(E_i),\mathrm{deg}(E_i))$ in the rank-degree coordinate plane successively and $(0,0)$ to $(\mathrm{rank}(E), \mathrm{degree}(E))$, the resulting convex polygon $\mathscr{P}(E)$ is called the $\emph{Harder-Narasimhan polygon}$ of $E$ (we refer to \cite{Shatz77} Section 3 for more details). In the case of rank $2$ and $\mathrm{char}(k)=2$, we denote the polygons defined in the following picture
\newline
\begin{center}
\begin{tikzpicture}[scale=0.6]
\tikzstyle{every node}=[font=\small,scale=1]
\tikzstyle{arrow}=[->,>=Stealth]       	
\draw [->] (0,0)--(6.5,0) node [below] {rank} ;
\draw [->] (0,0)--(0,4.2) node [left] {deg}; 
\draw (2.5,0)--(2.5,0.2);
\draw (5,0)--(5,0.2);
\draw (2.5,0) node [below] {$1$};
\draw (5,0) node [below] {$2$};
\draw (0,1.5) node [left] {$2d$};
\draw [dashed] (0,1.5)--(5,1.5);
\draw [dashed] (0,3.2)--(2.5,3.2);
\draw (0,3.2) node [left] {$d+j$};
\draw (0,0)--(2.5,3.2)--(5,1.5);
\draw (0,0)--(5,1.5);
\draw (4,3)  node [right]{$\mathscr{P}_j$};
\end{tikzpicture}
\end{center}
by $\mathscr{P}_j$, where $j\in \mathbb{N}$. There exists a natural order  on $\{\mathscr{P}_j\}_{j\in \mathbb{N}}$: if $j_1\geqslant j_2$, we denote $\mathscr{P}_{j_1} \succcurlyeq \mathscr{P}_{j_2}$. K. Joshi et.\,al. \cite{JRXY06} completely classify the Frobenius destabilized locus in $\mathcal{M}^s_X(2,d)$. Explicitly, they define 
\begin{equation*}
\mathcal{M}_j:= \big\{ [E]\in \mathcal{M}_X^s(2,d) \mid \mathrm{HNP}(F^*E)\succcurlyeq \mathscr{P}_j \big\}
\end{equation*}
and prove that they are closed subvarieties of $\mathcal{M}^s_X(2,d)$
with $\dim_k \mathcal{M}_j= g+2(g-1-j)$ for $1 \leqslant j \leqslant g-1$.
Moreover, $\mathcal{M}_j$ satisfy the nested property: $\mathcal{M}_{j}=\overline{\mathcal{M}_{j}\backslash \mathcal{M}_{j+1}} = \bigsqcup_{i \geqslant j} \mathcal{M}_i \backslash \mathcal{M}_{i+1}$.
Consider the determinant morphism
\begin{eqnarray}
    \mathrm{det}: \mathcal{M}^s_X(2,d) &\longrightarrow &  \mathrm{Pic}^d(X) \nonumber \\
  \left[ E \right]  \quad \, &  \longmapsto &   \, \mathrm{det}(E). \nonumber
\end{eqnarray}
For any $\mathcal{L} \in \mathrm{Pic}^d(X)$, one has $\mathcal{M}^s_X(2,\mathcal{L}):=\mathrm{det}^{-1}(\mathcal{L})$.
Taking intersection induces the Frobenius stratification of $\mathcal{M}^s_X(2, \mathcal{L})$.We denote the induced Frobenius strata 
$\mathcal{M}_j \cap \mathcal{M}^s_X(2,\mathcal{L})$ by $\mathcal{M}_j(\mathcal{L})$, where $1 \leqslant j \leqslant g-1$.

Let $E\subseteq F$ be locally free sheaves of rank 2 on $X$. For an integer $\ell \geqslant 0$,
$E$ is said to be a submodule of $F$ with \emph{co-length} $\ell$ if $length(F/E)=\ell$. Equivalently, $F/E$ is a skyscraper sheaf of degree $\ell$ supported on finite points. The following lemmas are due to K. Joshi et al. \cite{JRXY06}:
\begin{lemma}[Subsection 4.2 in \cite{JRXY06}]\label{lemma3}
    Let $k$ be an algebraically closed field of characteristic $2$, $X$ a smooth projective curve of genus $g\geqslant 2$ over $k$, $L$ a line bundle on $X$ of degree $d$, $V$ a submodule of $F_*L$ with co-length $\ell$, where $1 \leqslant \ell \leqslant g-2$. Then $V$ is stable and $F^*V$ is not semistable.
\end{lemma}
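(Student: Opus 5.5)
The plan is to understand $F_*L$ concretely and then test stability of $V$ and semistability of $F^*V$ directly. Throughout, $F$ is the absolute Frobenius in characteristic $2$.

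\textbf{Step 1: invariants of $F_*L$ and $V$.} Since $p=2$, $F_*L$ is locally free of rank $2$. Grothendieck--Riemann--Roch (or $\chi(F_*L)=\chi(L)$) gives
\[
\deg F_*L = d+(g-1).
\]
Hence $\deg V = d+g-1-\ell$ and $\mu(V)=(d+g-1-\ell)/2$.

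\textbf{Step 2: stability of $V$.} Let $N\subset V$ be a line subbundle. Composing with $V\subset F_*L$ and using adjunction $F^*\dashv F_*$, we get a nonzero map $F^*N\to L$. Therefore
\[
2\deg N\leqslant d, \qquad \deg N\leqslant d/2 .
\]
Stability of $V$ requires $\deg N<\mu(V)$. It suffices that
\[
\frac d2<\frac{d+g-1-\ell}{2},
\]
that is $\ell<g-1$, which holds since $\ell\leqslant g-2$. So $V$ is stable. This step is only a degree comparison and should be routine.

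\textbf{Step 3: $F^*V$ is not semistable.} We have $\deg F^*V=2\deg V=2(d+g-1-\ell)$, so $\mu(F^*V)=d+g-1-\ell$. Consider the quotient obtained from the canonical map
\[
F^*F_*L\to L
\]
composed with $F^*V\to F^*F_*L$. This composite is generically surjective because $V\to F_*L$ is an isomorphism away from finitely many points. Its image is therefore a line subsheaf $L'\subseteq L$ with $\deg L'\leqslant d$. Since $d< d+g-1-\ell$ (again because $\ell\leqslant g-2$), $L'$ is a quotient line bundle of $F^*V$ of degree below $\mu(F^*V)$. Hence the kernel, a line subbundle, has degree $\geqslant 2(d+g-1-\ell)-d=d+2g-2-2\ell>\mu(F^*V)$, so $F^*V$ is not semistable.

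\textbf{Main obstacle.} The delicate point is justifying that $F^*V\to L$ is nonzero, which I take as generic surjectivity of $F^*F_*L\to L$. This follows because the counit of the adjunction is surjective for the finite flat map $F$. Pulling back the generic isomorphism $V\to F_*L$ then preserves this surjectivity on an open set. One could alternatively use the known filtration of $F^*F_*L$, namely $0\to L\otimes\omega_X\to F^*F_*L\to L\to 0$ in characteristic $2$. Either way, the argument reduces to degree inequalities that use exactly $1\leqslant\ell\leqslant g-2$.
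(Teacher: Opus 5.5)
Your proof is correct. The paper gives no argument of its own for this lemma; it simply imports it from \cite{JRXY06}, Subsection 4.2, so there is no in-paper proof to compare against. Your proof is a complete, self-contained one. It uses only $\deg F_*L=d+g-1$ and the adjunction $\mathrm{Hom}(F^*N,L)\cong\mathrm{Hom}(N,F_*L)$. The adjunction gives $2\deg N\leqslant d$ for every line subsheaf $N\subset V\subset F_*L$, which yields stability. It also gives the nonzero map $F^*V\to L$, whose image has degree at most $d<\mu(F^*V)$, which yields non-semistability. Both comparisons reduce exactly to $\ell<g-1$.

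A few comments. What you call the main obstacle is not really one. The composite $F^*V\to F^*F_*L\to L$ is by definition the adjoint of the inclusion $V\hookrightarrow F_*L$, so it is automatically nonzero, and you do not need surjectivity of the counit.

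You were right to claim only $\deg L'\leqslant d$ rather than $L'=L$, because the image can be a proper subsheaf. Take $V=F_*(L(-x))\subset F_*L$, which has co-length $1$. By naturality of the counit, the image of $F^*V$ in $L$ is $L(-x)$. The maximal destabilizing subbundle of $F^*V$ then has degree $\deg V+g-1$, rather than $\deg V+g-1-\ell$ as it would if the image were all of $L$. So the lemma, and your argument, give non-semistability but not the exact Harder--Narasimhan polygon of $F^*V$. A co-length-one submodule of $F_*L$ at $x$ can therefore lie in the deeper stratum, which matters wherever the lemma is used to place $V$ in a specific $\mathcal{M}_j$.

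Finally, the hypothesis $\ell\geqslant 1$ is never used, so your argument also covers $V=F_*L$ itself.
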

\begin{lemma}[Subsection 4.3 in \cite{JRXY06}]\label{lemma4}
Let $k$ be an algebraically closed field of characteristic $2$, $X$ a smooth projective curve of genus $g\geqslant 2$ over $k$, $E$  a rank 2 degree $d$ stable vector bundle such that $F^*E$ is not semistable. Suppose the Harder-Narasimhan filtration of $F^*E$ is 
$$0= E_0\subset E_1 \subset E_2=F^*E$$
with $\mathrm{deg}(E_1)=d+j$, $1 \leqslant j \leqslant g-1$. Let $L:=F^*E/E_1$ be the quotient line bundle of degree $d-j$. Then, $E$ is a submodule of $F_*L$ with co-length $g-1-j$.
\end{lemma}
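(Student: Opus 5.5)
The plan is to produce the inclusion $E\hookrightarrow F_*L$ by adjunction from the Harder--Narasimhan quotient $F^*E\twoheadrightarrow L$. I would then check that this map is injective and compute its co-length by a degree count.

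\emph{Step 1 (the map).} Since $F$ is finite, $F^*$ is left adjoint to $F_*$, so
\[
\mathrm{Hom}(F^*E,L)\cong \mathrm{Hom}(E,F_*L).
\]
The surjection $F^*E\to L=F^*E/E_1$ is nonzero, so it corresponds to a nonzero morphism $\alpha\colon E\to F_*L$. Since $\mathrm{char}(k)=2$, the sheaf $F_*L$ is locally free of rank $2$, the same rank as $E$.

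\emph{Step 2 (injectivity), which I expect to be the main point.} Suppose $\alpha$ is not injective. Its image is a nonzero torsion-free subsheaf of $F_*L$ of rank $1$, so it is a line bundle $M$ that is a quotient of $E$. Stability of $E$ then gives $\deg M>d/2$. On the other hand, the inclusion $M\hookrightarrow F_*L$ is a nonzero map. By adjunction it corresponds to a nonzero map $F^*M\to L$ between line bundles, so
\[
2\deg M=\deg F^*M\leqslant \deg L=d-j .
\]
Hence $\deg M\leqslant (d-j)/2<d/2$ because $j\geqslant 1$, which contradicts $\deg M>d/2$. So $\alpha$ is injective, and $E$ is a submodule of $F_*L$ of full rank. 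This argument avoids needing stability of $F_*L$.

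\emph{Step 3 (co-length).} I would compute $\deg F_*L$ from $\chi(F_*L)=\chi(L)$, which holds because $F$ is finite. Riemann--Roch on both sides gives
\[
\deg F_*L+2(1-g)=d-j+1-g,
\]
so $\deg F_*L=d-j+g-1$. The quotient $F_*L/E$ is then a torsion sheaf of length $\deg F_*L-\deg E=g-1-j$. Note that this length is $\geqslant 0$, in agreement with the bound $j\leqslant g-1$. This is exactly the claimed co-length.
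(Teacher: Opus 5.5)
Your proof is correct and complete. The paper does not prove this lemma itself; it quotes it from \cite{JRXY06}, Subsection 4.3. Your argument is the standard one behind that citation, in three steps.
\begin{enumerate}
\item The Harder--Narasimhan quotient $F^*E\twoheadrightarrow L$ gives, by adjunction, a nonzero map $\alpha\colon E\to F_*L$.
\item A rank-one image $M$ is impossible. Stability of $E$ forces $\deg M>d/2$, while the adjoint nonzero map $M^{\otimes 2}=F^*M\to L$ forces $\deg M\leqslant (d-j)/2$.
\item The formula $\deg F_*L=\deg L+g-1$ follows from $\chi(F_*L)=\chi(L)$. For the absolute Frobenius, the $k$-structure on $H^i(X,F_*L)$ is twisted by $c\mapsto c^2$, but this does not change dimensions because $k$ is perfect.
\end{enumerate}
As you note, your injectivity step uses only the elementary bound $2\deg M\leqslant\deg L$ on line subsheaves $M\subset F_*L$, not the stability of $F_*L$. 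Your co-length count also recovers the inequality $j\leqslant g-1$ rather than needing it as a hypothesis.
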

 
The particular case we will use in Section 4 is the following: for any $[E] \in \mathcal{M}_{g-2} \backslash \mathcal{M}_{g-1}$, there exists a line bundle $L\in \mathrm{Pic}^{d-g+2}(X)$ fitting into a short exact sequence
\begin{equation*}
    0 \to E \to F_*L \to \textbf{k}(x) \to 0,
\end{equation*}
where $\textbf{k}(x)$ denotes the skyscraper sheaf of degree $1$ supported at $\{x\}$.

\section{Hecke curves}\label{sec3}
Let $k$ be an algebraically closed field of arbitrary characteristic and $X$ be a smooth projective curve of genus $g \geqslant 2$  over $k$. Let $\mathcal{M}^s_X(r,\mathcal{L})$ be the moduli space of stable vector bundles with fixed determinant $\mathcal{L} \in \mathrm{Pic}^d(X)$. 
Since we will see in Section \ref{sec4} that every rational curve in $\mathcal{M}^s_X(r,d)$ is contained in $\mathcal{M}^s_X(r,\mathcal{L})$ for some $\mathcal{L}\in \mathrm{Pic}^d(X)$, we work directly with rational curves in $\mathcal{M}^s_X(r,\mathcal{L})$. A rational curve in $\mathcal{M}^s_X(r,\mathcal{L})$ is a non-trivial morphism $\phi:\mathbb{P}^1 \to \mathcal{M}^s_X(r,\mathcal{L})$. The degree of $\phi$ is defined as $\mathrm{deg}(\phi):=\mathrm{deg}(\phi^*(-K_{\mathcal{M}^s_X(r,\mathcal{L})}))$. In this section, we focus on the study of \emph{Hecke curves}.

\emph{Hecke transformations} provide a fundamental method for constructing rational curves passing through a fixed point $[E]\in\mathcal{M}^s_X(r,\mathcal{L})$. This section reviews this approach, drawing on results from Section 1 of \cite{Sun05}.
For a vector bundle $E$ over $X$, we denote its fibre at $x\in X$ by $E_x$, and let $\textbf{k}(x)$ denote the skyscraper sheaf of degree $1$ supported on $\{x\}$. For a subspace $W\subset E_x$, there are two types of modifications of $E$, known as \emph{Hecke transformation} (I) and \emph{Hecke transformation} (II) defined as follows:
\begin{itemize}
    \item [(I)] The \emph{Hecke transformation} (I) of $E$ along $W$ at $x \in X$ is defined as $$E^W:=\mathrm{Ker}(E\to (E_x/W)\otimes\textbf{k}(x))$$
    Explicitly, $E^W$ satisfies
    $$0\to E^W \xrightarrow{\phi} E \to (E_x/W)\otimes\textbf{k}(x)\to 0$$
    with $\phi_x(E^W_x)=W \subseteq E_x$.
    \newline
    \item[(II)] For $W^{\bot} \subset E_x^{\vee}$, the subspace annihilated by $W$, let $(E^{\vee})^{W^{\bot}}$ be the Hecke transformation (I) of  $E^\vee$ along $W^{\bot}$, \emph{i.e.} $(E^{\vee})^{W^{\bot}}$ satisfies 
    $$0\to(E^{\vee})^{W^{\bot}} \xrightarrow{\phi} E^{\vee} \to (E_x^{\vee}/W^{\bot})\otimes\textbf{k}(x)\to 0.$$
    Its dual bundle $\widetilde{E^W}:=((E^{\vee})^{W^{\bot}})^{\vee}$ satisfies 
    $$0 \to E \xrightarrow{\psi} \widetilde{E^W} \to (\widetilde{E_{\ x}^W}/(W^{\bot})^{\vee})\otimes \textbf{k}(x) \to 0$$
    with $\mathrm{Ker}(\psi_x: E_x \to \widetilde{E_{\ x}^W})=W$. We call $\widetilde{E^W}$ the  \emph{Hecke transformation} (II) of $E$ along $W$ at $x \in X$.
\end{itemize}
The above procedures actually construct vector bundles $E^W$ and $\widetilde{E^W}$ satisfying
$$E^W \subset E \subset \widetilde{E^W}$$ with degree $\mathrm{deg}(E^W)=\mathrm{deg}(E)-\dim_k(E_x/W)$ and $\mathrm{deg}(\widetilde{E^W})= \mathrm{deg}(E)+\dim_k(E_x/W)$.

For any $[E]\in\mathcal{M}^s_X(r,d)$ and $x \in X$, let $\zeta \subset E_x$ be a 1-dimensional subspace. Hecke transformation (I) produces a vector bundle $E^{\zeta}$ satisfying 
\begin{equation*}
    0 \to E^{\zeta} \to E \to (E_x/\zeta)\otimes \textbf{k}(x) \to 0.
\end{equation*}
Let $\iota: E_x^{\zeta}\to E_x$ be the morphism between fibres at $x$ induced by the sheaf injection $ E^{\zeta}\to E$. The kernel $\mathrm{ker}(\iota)$ is a $(r-1)$-dimensional subspace of $E_x^{\zeta}$. Let $\mathrm{Gr}(r-1,E_x^\zeta)\cong \mathbb{P}^{r-1}$ be the Grassmannian of $(r-1)$-dimensional subspaces of $E_x^\zeta$ and $\mathcal{H}$ be a line in $\mathrm{Gr}(r-1,E_x^\zeta)$ passing through the point $[\mathrm{ker}(\iota)]$. For any point $[l]\in \mathcal{H}$ corresponding to a $(r-1)$-dimensional subspace $l \subset E_x^{\zeta}$, Hecke transformation (II) yields a vector bundle $\widetilde{(E^{\zeta})^{l}}$ fitting into a short exact sequence
\begin{equation*}
    0 \to E^{\zeta} \to \widetilde{(E^{\zeta})^{l}} \to (\widetilde{(E^{\zeta})^{l}_x}/(l^{\bot})^{\vee})\otimes \textbf{k}(x) \to 0
\end{equation*}
and for $l = \mathrm{ker}(\iota)$, one has $(\widetilde{E^{\zeta})^{\mathrm{ker}(\iota)}} \cong E$. Note that $\{\widetilde{(E^{\zeta})^{l}} \mid [l] \in \mathcal{H} \}$ are still vector bundles of rank $r$ and degree $d$. Suppose that $\{\widetilde{(E^{\zeta})^{l}}\}$ are stable, then the above construction yields a rational curve $\{ \widetilde{(E^{\zeta})^{l}}\mid [l]\in \mathcal{H} \}$ passing through $[E]$ in $\mathcal{M}^s_X(r,\mathcal{L})$, which we call a \emph{Hecke curve}. A study of Hecke curves can be found in \cite{Hwang02}.

In the case $k=\mathbb{C}$ and $X$ has genus at least 3, $r\geqslant 2$, for any $\mathcal{L}\in \mathrm{Pic}^d(X)$, X. Sun \cite{Sun05} showed any rational curve passing through the generic point in $\mathcal{M}^s_X(r,\mathcal{L})$ has degree at least $2r$. So the rational curves of degree $2r$ are also called \emph{minimal} rational curves. The following theorem is due to him:

\begin{theorem}[Theorem 1 \cite{Sun05}]
If $k=\mathbb{C}$ and $g \geq 3$, then any rational curve $\phi: \mathbb{P}^1 \to \mathcal{M}^s_X(r,\mathcal{L})$ passing through the generic point has degree at least $2r$. It has degree $2r$ if and only if it is a Hecke curve unless $g=3, r=2$ and $d$ is even.
\end{theorem}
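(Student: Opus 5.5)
The plan is to follow the strategy of \cite{Sun05}: convert the degree of $\phi$ into a Chern class computation on $X\times\mathbb{P}^1$, then bound that quantity using stability of the general member together with the splitting type of the family along the $\mathbb{P}^1$-direction.

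\textbf{Step 1 (a family and the degree formula).} Since the Brauer obstruction to a universal family vanishes over $\mathbb{P}^1$, I will lift $\phi$ to a vector bundle $\mathcal{E}$ on $X\times\mathbb{P}^1$ with $\mathcal{E}|_{X\times\{t\}}\cong E_t$ and $[E_t]=\phi(t)$. The tangent bundle pulls back as $\phi^*T_{\mathcal{M}}=R^1p_*\mathcal{E}nd_0(\mathcal{E})$, where $p\colon X\times\mathbb{P}^1\to\mathbb{P}^1$ is the projection. Grothendieck--Riemann--Roch then gives
\begin{equation*}
\deg\phi=\deg\phi^*(-K_{\mathcal{M}})=2r\,c_2(\mathcal{E})-(r-1)c_1(\mathcal{E})^2=:\Delta(\mathcal{E}).
\end{equation*}
This quantity is invariant under twisting $\mathcal{E}$ by line bundles, and $\det\mathcal{E}$ is the pullback of $\mathcal{L}$ up to a twist from $\mathbb{P}^1$.

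\textbf{Step 2 (splitting along the fibres $\{x\}\times\mathbb{P}^1$).} For general $x\in X$ write $\mathcal{E}|_{\{x\}\times\mathbb{P}^1}\cong\bigoplus\mathcal{O}(a_i)$ with $a_1\geqslant\cdots\geqslant a_r$.

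If this type is non-constant, the relative Harder--Narasimhan filtration in the $\mathbb{P}^1$-direction extends to a filtration of $\mathcal{E}$ by saturated subsheaves $\mathcal{E}_i$ defined over an open set of $X$. I will express $\Delta(\mathcal{E})$ as a sum of terms of the shape
\begin{equation*}
(a_i-a_{i+1})\cdot\bigl(\text{slope gap on }X\text{ between }\mathcal{E}_i|_{X\times t}\text{ and }E_t\bigr)
\end{equation*}
plus nonnegative correction terms. Each restriction $\mathcal{E}_i|_{X\times t}$ is a subsheaf of the stable bundle $E_t$. Because $\phi$ passes through a generic point, Lange-type bounds on maximal subbundles of a general stable bundle apply: a rank $k$ subbundle has slope at most $\mu-\tfrac{(r-k)(g-1)}{r}$ roughly. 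For $g\geqslant3$ this forces $\Delta>2r$.

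If the generic splitting type is balanced, after twisting it is trivial. Then $p'_*\mathcal{E}$, with $p'$ the projection to $X$, defines a bundle $E_0$ with $p'^*E_0\hookrightarrow\mathcal{E}$, and the cokernel is supported on finitely many fibres $\{x_j\}\times\mathbb{P}^1$. So $\mathcal{E}$ is obtained from a constant family by a sequence of elementary transformations along these fibres. I will compute that each nontrivial transformation at a point contributes at least $2r$ to $\Delta$, with equality exactly when there is a single point $x$ and the transformation is the rank-one modification along a line $\mathcal{H}\subset\mathrm{Gr}(r-1,E^\zeta_x)$ described in Section \ref{sec3}.

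\textbf{Step 3 (equality case) and the main obstacle.} The previous step shows that $\deg\phi=2r$ forces the family to be $\{\widetilde{(E^\zeta)^l}\}_{[l]\in\mathcal{H}}$, i.e. a Hecke curve. Conversely, for a Hecke curve the same computation gives $\Delta=2r$. The main obstacle is the non-balanced case, where the inequality must be strict. The bound is only borderline, and precisely for $g=3$, $r=2$ and $d$ even, Lange's bound allows line subbundles of slope $\mu-1$ in a general bundle. In that situation one gets extra degree-$2r$ curves built from extensions, which are not Hecke curves. So I would first carry out the rank-$2$ computation explicitly, where $\Delta=4c_2-c_1^2$ and the only filtration is by a line subsheaf, to isolate this exception. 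After that I would run the general-rank estimate with careful bookkeeping of the integrality of the $a_i$ and of the subsheaf degrees.
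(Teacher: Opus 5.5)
The paper does not prove this statement: it is quoted from \cite{Sun05} as background and never used. So the only meaningful comparison is with Sun's argument, and your outline is essentially that argument: a family $\mathcal{E}$ on $X\times\mathbb{P}^1$, the identity $\deg\phi=2rc_2(\mathcal{E})-(r-1)c_1(\mathcal{E})^2$, the relative Harder--Narasimhan filtration along the fibres $\{x\}\times\mathbb{P}^1$ combined with Lange's bound $k\deg E-r\deg E'\geqslant k(r-k)(g-1)$ for rank-$k$ subsheaves $E'$ of a general $E$ in the unbalanced case, and, after twisting, the modification $0\to p'^*p'_*\mathcal{E}\to\mathcal{E}\to Q\to 0$ with $p'_*Q=0$ in the balanced case; it is correct in outline.

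One point to state precisely: the unbalanced case gives $\deg\phi\geqslant 2(r-1)(g-1)$. This is strictly larger than $2r$ for $g\geqslant 3$ except when $r=2$, $g=3$. In that case the parity of $d-2\deg(\mathcal{E}_1|_{X\times t})$ rules out equality for $d$ odd and leaves exactly the stated exception for $d$ even. So your Step~2 claim that $g\geqslant 3$ already forces $\Delta>2r$ needs this qualification, as your Step~3 implicitly recognizes.
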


\section{Main results}\label{sec4}
The aim of this section is to prove the following theorem: 
\begin{theorem}
Let $k$ be an algebraically closed field of characteristic $2$, and let $X$ be a smooth projective curve of genus $g\geqslant 2$ over $k$. Let $\mathcal{M}^s_X(2,d)$ be the moduli space of stable vector bundles of rank $2$ and degree $d$ over $X$ and $\mathcal{M}_j:=\{ [E]\in \mathcal{M}_X^s(2,d) \mid \mathrm{HNP}(F^*E) \succcurlyeq \mathscr{P}_j \}$, $\mathcal{M}_j(\mathcal{L}):=\mathcal{M}^s_X(2,\mathcal{L}) \cap \mathcal{M}_j$ be the Frobenius strata,  $1\leqslant j \leqslant g-1$. Then
\begin{itemize}
    \item [(1)] There is no rational curve in $\mathcal{M}_{g-1}$;
    \item[(2)] For any rational curve $\phi: \mathbb{P}^1 \to \mathcal{M}^s_X(2,d)$, there exists $\mathcal{L} \in \mathrm{Pic}^d(X)$ such that $\phi(\mathbb{P}^1) \subseteq \mathcal{M}^s_X(2,\mathcal{L}) \subseteq \mathcal{M}^s_X(2,d)$;
    \item[(3)] For any $\mathcal{L}\in \mathrm{Pic}^d(X)$ and any $[E] \in \mathcal{M}_{g-2}(\mathcal{L}) \backslash \mathcal{M}_{g-1}(\mathcal{L})$, there exists a Hecke curve in $\mathcal{M}_{g-2}(\mathcal{L})\backslash \mathcal{M}_{g-1}(\mathcal{L})$ passing through $[E]$.
\end{itemize}
\end{theorem}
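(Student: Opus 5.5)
We treat the three parts separately, using throughout the description of strata in Lemma~\ref{lemma3} and Lemma~\ref{lemma4}.

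For (1), take $[E]\in\mathcal{M}_{g-1}$. Lemma~\ref{lemma4} with $j=g-1$ gives co-length $0$, so $E\cong F_*L$ with $L=F^*E/E_1$ of degree $d-g+1$. Conversely, every $F_*L$ with $\deg L=d-g+1$ is stable (this is the classical fact underlying Lemma~\ref{lemma3}) and lies in $\mathcal{M}_{g-1}$. Hence $L\mapsto F_*L$ is a bijective morphism $\mathrm{Pic}^{d-g+1}(X)\to\mathcal{M}_{g-1}$, with inverse $E\mapsto F^*E/E_1$. Both sets have dimension $g$, and $\mathcal{M}_{g-1}$ is closed. Now suppose $\phi:\mathbb{P}^1\to\mathcal{M}_{g-1}$ is a nonconstant morphism. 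We lift it to the normalization of its image, a rational curve which maps finitely onto its image. That image is the image of a curve in the abelian variety $\mathrm{Pic}^{d-g+1}(X)$ under a bijective morphism, so the rational curve dominates a curve in the abelian variety. This is impossible, since abelian varieties contain no rational curves. To avoid worrying about normalizations, we will instead argue directly: the family $F^*E_t/E_{1,t}$ gives a rational family of line bundles, which must be constant, and therefore $E_t\cong F_*L$ is constant.

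For (2), compose $\phi$ with $\det:\mathcal{M}^s_X(2,d)\to\mathrm{Pic}^d(X)$. This gives a morphism from $\mathbb{P}^1$ to an abelian variety, which is constant. So $\det\circ\phi$ is a single point $\mathcal{L}$, and $\phi(\mathbb{P}^1)\subseteq\det^{-1}(\mathcal{L})=\mathcal{M}^s_X(2,\mathcal{L})$.

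For (3), take $[E]\in\mathcal{M}_{g-2}(\mathcal{L})\setminus\mathcal{M}_{g-1}(\mathcal{L})$. By the remark after Lemma~\ref{lemma4}, there is an exact sequence $0\to E\to F_*L\to\mathbf{k}(x)\to 0$ with $\deg L=d-g+2$. Let $\zeta=\ker(E_x\to (F_*L)_x)\subset E_x$. This is a $1$-dimensional kernel, since $E\to F_*L$ is a co-length-one subsheaf map between rank-$2$ bundles. The plan is to choose the Hecke curve so that every member is a co-length-one submodule of the same $F_*L$. Apply Hecke transformation (I) along $\zeta$, with $\zeta$ as the chosen line: $E^\zeta\subset E$ with $E/E^\zeta\cong\mathbf{k}(x)$. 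Then $E^\zeta$ is a co-length-$2$ submodule of $F_*L$ with $(F_*L)/E^\zeta$ supported at $x$.

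Since $r=2$, $\mathrm{Gr}(1,E^\zeta_x)\cong\mathbb{P}^1$ is itself the unique line $\mathcal{H}$. The Hecke curve therefore consists of all the bundles $E'$ with $E^\zeta\subset E'$ of co-length $1$ at $x$, which are the elementary enlargements $\widetilde{(E^\zeta)^l}$. The key step, and the main obstacle, is to show that every such $E'$ embeds into $F_*L$ with co-length $1$. We would first check that $(F_*L)/E^\zeta$ is isomorphic to $\mathcal{O}_{X,x}/\mathfrak{m}_x^2$ and not to $\mathbf{k}(x)^{\oplus 2}$; this should follow from the choice of $\zeta$ as the kernel direction. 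If it holds, the intermediate sheaves between $E^\zeta$ and $F_*L$ form a single one, not a $\mathbb{P}^1$, which is bad. So instead we use the other case: we arrange $\zeta$ so that $(F_*L)/E^\zeta\cong\mathbf{k}(x)^{2}$. This means choosing the Hecke line as the kernel-compatible one, namely $E^\zeta=\mathfrak{m}_x F_*L$ locally, i.e. $E^\zeta=F_*L(-x)$ up to twist. Then every intermediate $E'$ satisfies $E^\zeta\subset E'\subset F_*L$ with co-length $1$ in $F_*L$. This gives the $\mathbb{P}^1$ of subspaces of $(F_*L)_x$, all with determinant $\mathcal{L}$, one of which is $E$.

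Lemma~\ref{lemma3} with $\ell=1\le g-2$ (note $g\ge3$ is needed; for $g=2$ the stratum condition makes the statement vacuous or we handle it separately) then gives that each $E'$ is stable. Lemma~\ref{lemma3} together with Lemma~\ref{lemma4} places $E'$ in $\mathcal{M}_{g-2}$. Finally, $E'\notin\mathcal{M}_{g-1}$: otherwise $E'\cong F_*L'$ for some line bundle $L'$ of degree $d-g+1$ would embed in $F_*L$ with co-length $1$. By adjunction this gives a nonzero map $F^*F_*L'\to L$, and comparing the Harder–Narasimhan filtrations of $F^*E'$ yields a contradiction of degrees.
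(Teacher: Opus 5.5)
Your part (2) and your construction in (3) match the paper's. With $\zeta=\ker(E_x\to(F_*L)_x)$ you get $E^\zeta=(F_*L)(-x)$, and the Hecke curve is the $\mathbb{P}^1$ of sheaves $E'$ with $(F_*L)(-x)\subset E'\subset F_*L$, each of co-length one in $F_*L$. Your dichotomy first has the cases reversed: the kernel direction is exactly the one giving $(F_*L)/E^\zeta\cong\mathbf{k}(x)^{\oplus 2}$, and every other $\zeta$ gives the cyclic quotient $\mathcal{O}_{X,x}/\mathfrak{m}_x^2$.

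The step that fails is the last one, excluding $\mathcal{M}_{g-1}$. Suppose $E'\cong F_*L'$ with $\deg L'=d-g+1$. The adjoint map $F^*F_*L'\to L$ vanishes on $L'\otimes\omega_X$, since that has degree $d+g-1>d-g+2=\deg L$. So it comes from a nonzero map $L'\to L$. This only needs $\deg L'\le\deg L$, which holds, and it forces $L\cong L'(x)$ rather than a contradiction. The case really occurs on your curve. Over $k[[t^2]]$ one has $t^2k[[t]]\subset t\,k[[t]]\subset k[[t]]$, that is, $(F_*L)(-x)=F_*(L(-2x))\subset F_*(L(-x))\subset F_*L$ with co-length one at each step. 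So $F_*(L(-x))$ is the member given by the line $k\,\bar t\subset(F_*L)_x$. It is stable and has determinant $\mathcal{L}$. Moreover $F^*F_*(L(-x))$ has destabilizing subbundle $L(-x)\otimes\omega_X$ of degree $d+g-1$, so $[F_*(L(-x))]\in\mathcal{M}_{g-1}(\mathcal{L})$.

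So no degree comparison can close this step: your curve lies in $\mathcal{M}_{g-2}(\mathcal{L})$ but meets $\mathcal{M}_{g-1}(\mathcal{L})$. The paper's proof has the same defect. It uses $\deg F_*L'=\deg L'+g-2$, whereas in characteristic $2$ one has $\deg F_*L'=\deg L'+g-1$; the paper's own setup requires this, since $F_*$ sends $\mathrm{Pic}^{d-g+1}(X)$ to degree $d$. Its inequality therefore only gives $\deg F_*L'\le d$.

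In fact $\det F_*M\cong M\otimes\det F_*\mathcal{O}_X$, so $\mathcal{M}_{g-1}(\mathcal{L})$ is a single point. $\mathcal{M}_{g-2}(\mathcal{L})$ is swept out by these $\mathbb{P}^1$'s, one for each $x\in X$, all passing through that point. A complete rational curve avoiding that point would have to lie in one such $\mathbb{P}^1$ minus the point, or dominate $X$, which has genus $\ge 2$; both are impossible. So (3) as stated cannot be rescued by choosing a different curve. What the argument genuinely proves is that the closed stratum $\mathcal{M}_{g-2}(\mathcal{L})$ is covered by Hecke curves.

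A smaller point in (1): the bijective morphism runs from $\mathrm{Pic}^{d-g+1}(X)$ to $\mathcal{M}_{g-1}$. So the curve in the abelian variety maps onto your rational curve, not the reverse. You need either that a bijective morphism of curves is purely inseparable on normalizations, hence genus-preserving, or simply the isomorphism $L\mapsto F_*L$ that the paper cites.
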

\begin{proof}
(1) In the case of characteristic $2$, according to \cite[Theorem 2.5]{Li14}, the isomorphism
\begin{eqnarray}
S^s_{Frob}: \mathrm{Pic}^{d-g+1}(X) & \to & \mathcal{M}_{g-1} \nonumber \\
    \left[L\right] \qquad & \to & \left[F_*L\right] \nonumber
\end{eqnarray}
implies that $\mathcal{M}_{g-1}$ is isomorphic to an abelian variety, which cannot contain any rational curves.

(2) Suppose $\phi: \mathbb{P}^1 \to \mathcal{M}^s_X(2,d)$ is a rational curve. Consider the determinant morphism 
\begin{eqnarray}
    \mathrm{det}: \mathcal{M}^s_X(2,d) &\longrightarrow &  \mathrm{Pic}^d(X) \nonumber \\
  \left[ E \right]  \quad \, &  \longmapsto &   \, \mathrm{det}(E). \nonumber
\end{eqnarray}
The image of $\mathrm{det} \circ \phi$ is constant since $\mathrm{Pic}^d(X)$ is an abelian variety. Thus, any rational curve is contained in $\mathcal{M}^s_X(2,\mathcal{L})$ for some $\mathcal{L} \in \mathrm{Pic}^d(X)$.

(3) For any $[{E}] \in \mathcal{M}_{g-2}(\mathcal{L})\backslash\mathcal{M}_{g-1}(\mathcal{L})$, according to Lemma \ref{lemma4}, there exists $L \in \mathrm{Pic}^{d-g+2}(X)$ such that 
\begin{equation}\label{4.1}
    0 \to E  \xrightarrow{f} F_*L  \to  \textbf{k}(x) \to 0, 
\end{equation}
is exact. Then, $f$ induces an exact sequence of vector spaces  
\begin{equation*}
    E_x  \xrightarrow{f_x} (F_*L)_x \to \textbf{k}(x)_x \to 0,
\end{equation*}
where $\dim_k\textbf{k}(x)_x=1$. 
For $\zeta:=  \ \mathrm{Ker}(f_x) \subset E_x$, Hecke transformation (I) gives a vector bundle $E^{\zeta}$ satisfying 
\begin{equation*}
    0 \to E^{\zeta} \xrightarrow{q} E \to (E_x/\zeta)\otimes \textbf{k}(x) \to 0. 
\end{equation*}
According to the construction, $q_x(E^{\zeta}_x)=\zeta = \mathrm{Ker}(f_x)$ and $g_x:=f_x \circ q_x: E^{\zeta}_x \to (F_*L)_x$
is zero. We denote the unique maximal ideal of $\mathcal{O}_{X,x}$ by $\mathfrak{m}_x$. To avoid confusion, we denote the localization at a point $x\in X$ by $(-)_{\mathfrak{p}_x}$. Locally, the image of 
$g_{\mathfrak{p}_x}:E_{\mathfrak{p}_x}^{\zeta} \to (F_*L)_{\mathfrak{p}_x}$
is contained in $\mathfrak{m}_x \cdot (F_*L)_{\mathfrak{p}_x}$.
Hence, $g:=f\circ q:E^\zeta \to F_*L$ factors through $F_*L(-x) \to F_*L$. The following commutative diagram 
$$\xymatrix{
0 \ar[r] & E^\zeta \ar[r]^{g} \ar[d]^{\cong} & F_*L \ar[r]\ar@{=}[d] & (F_*L)_x \otimes \textbf{k}(x) \ar[r]\ar[r]\ar@{=}[d] & 0\\
0\ar[r]  &F_*L(-x) \ar[r] & F_*L \ar[r] & (F_*L)_x\otimes \textbf{k}(x) \ar[r] & 0}$$
implies $E^\zeta \cong F_*L(-x)$.

For any point $[l] \in \mathrm{Gr}(1,E_x^\zeta)\cong \mathbb{P}^1$, Hecke transformation (II) of $E^\zeta$ along $l\subset E_x^\zeta$ yields a vector bundle $\widetilde{(E^{\zeta})^l}$ satisfying
\begin{equation}\label{applyfunctor}
    0 \to E^\zeta \xrightarrow{r} \widetilde{(E^{\zeta})^l}\xrightarrow{s} (\widetilde{(E^{\zeta})_x^l}/(l^{\bot})^{\vee})\otimes \textbf{k}(x) \to 0.
\end{equation}
Note that for $l = \mathrm{ker}(q_x)$, we have $\widetilde{(E^\zeta)^{l}} \cong E$ by Section \ref{sec3}. 

For an injection of $k$-vector space $i: \widetilde{(E^{\zeta})_x^l}/(l^{\bot})^{\vee} \to (F_*L)_x$, if there exists an injection $h_{\mathfrak{p}_x}$ making the following diagram of $\mathcal{O}_{X,x}$-modules commute, 
$$\xymatrix{
0 \ar[r] & E_{\mathfrak{p}_x}^{\zeta} \cong F_*L(-x)_{\mathfrak{p}_x} \ar[r]^{\qquad r_{\mathfrak{p}_x}} \ar[d]^{\cong} & \widetilde{(E^\zeta)^l}_{\mathfrak{p}_x} \ar[r]^{s_{\mathfrak{p}_x}\quad} \ar@{-->}[d]^{h_{\mathfrak{p}_x}}& \widetilde{(E^{\zeta})_x^l}/(l^{\bot})^{\vee} \ar[r]\ar[d]^{i} & 0\\
0\ar[r]  &F_*L(-x)_{\mathfrak{p}_x} \ar[r]  & (F_*L)_{\mathfrak{p}_x} \ar[r]^{t_{\mathfrak{p}_x}} & (F_*L)_x \ar[r] & 0
}$$
we have an injection 
$$h:\widetilde{(E^\zeta)^l} \to F_*L$$ 
of sheaves of $\mathcal{O}_{X}$-modules and $\mathrm{deg}(F_*L) = \mathrm{deg}(\widetilde{(E^\zeta)^l})+1$.

Since the problem is local, we may assume $(F_*L)_{\mathfrak{p}_x} \cong F_*(k[[t]])$ is a rank 2 free $k[[t^2]]$-module generated by $\{1,t\}$ and $F_*L(-x)_{\mathfrak{p}_x}=\mathfrak{m}_x \cdot (F_*L)_{\mathfrak{p}_x}$ generated by $t^2 \cdot \{1,t\} = \{t^2, t^3\}$. Then $(F_*L)_x$ is a $k$-vector space spanned by $\{\overline{1}, \overline{t} \}$. Denote $\boldsymbol{u}:=r_{\mathfrak{p}_x}(t^2)$ and $\boldsymbol{v}:=r_{\mathfrak{p}_x}(t^3)$.  Suppose $\widetilde{(E^\zeta)^l}_x/(l^{\bot})^{\vee}$ is spanned by  $\overline{\boldsymbol{w}}$ and $\boldsymbol{w}$ is a preimage of $\overline{\boldsymbol{w}}$ in $\widetilde{(E^\zeta)^l}_{\mathfrak{p}_x}$. Then, $\widetilde{(E^\zeta)^l}_{\mathfrak{p}_x}$ can be expressed as 
$$\widetilde{(E^\zeta)^l}_{\mathfrak{p}_x} = k[[t^2]] \cdot \boldsymbol{u}   \oplus k[[t^2]] \cdot \boldsymbol{v}  + k\cdot \boldsymbol{w}.$$
Let $i: \overline{\boldsymbol{w}} \mapsto \overline{a+bt} \in (F_*L)_x$, where $a,b \in k$. Then $a+bt$ is a representative element of $t_{\mathfrak{p}_x}^{-1}(\overline{a+bt})$ in $(F_*L)_{\mathfrak{p}_x}$. We can construct the natural embedding 
\begin{equation*}
    h_{\mathfrak{p}_x}: \boldsymbol{u} \mapsto t^2, \quad  \boldsymbol{v} \mapsto t^3, \quad  \boldsymbol{w} \mapsto a+bt.
\end{equation*}
It is obvious that $h_{\mathfrak{p}_x}$ makes the diagram commute. Since $$\mathrm{deg}(F_*L/\widetilde{(E^\zeta)^l})=1,$$
by Lemma \ref{lemma3}, $\widetilde{(E^\zeta)^l}$ is stable and $F^*(\widetilde{(E^\zeta)^l})$ is not semistable. 

If there exists $l\subset E_x^\zeta$ such that $[\widetilde{(E^\zeta)^l}] \notin \mathcal{M}_{g-2}(\mathcal{L})\backslash\mathcal{M}_{g-1}(\mathcal{L})$, by Lemma \ref{lemma4}, there exists a line bundle $L'$ of $\mathrm{deg}(L')<d-g+2$ such that  $\widetilde{(E^\zeta)^l} \to F_*L'$ is an embedding. But 
$$ \mathrm{deg}(F_*L') = \mathrm{deg}(L') + g-2 \leqslant d-1 < d =\mathrm{deg}(\widetilde{(E^\zeta)^l}),$$
which is a contradiction. Thus, we obtain a Hecke curve $\{\widetilde{(E^\zeta)^l} \mid  [l] \in \mathrm{Gr}(1,E_x^\zeta)\}$ in $\mathcal{M}_{g-2}(\mathcal{L})\backslash\mathcal{M}_{g-1}(\mathcal{L})$ passing through $[E]$.

\end{proof}

\end{document}